\titleformat{\chapter}[display]
{\normalfont\huge\bfseries}{\chaptertitlename\\thechapter}{20pt}{\Huge}
\titleformat{\subsubsection}[runin]
{\normalfont\normalsize\bfseries}{\thesubsubsection}{1em}{}
\titleformat{\paragraph}[runin]
{\normalfont\normalsize\bfseries}{\theparagraph}{1em}{}
\titleformat{\subparagraph}[runin]
{\normalfont\normalsize\bfseries}{\thesubparagraph}{1em}{}
\titlespacing*{\chapter} {0pt}{50pt}{40pt}
\titlespacing*{\section} {0pt}{3.5ex plus 1ex minus .2ex}{2.3ex plus .2ex}
\titlespacing*{\subsection} {0pt}{3.25ex plus 1ex minus .2ex}{1.5ex plus .2ex}
\titlespacing*{\subsubsection}{0pt}{3.25ex plus 1ex minus .2ex}{1.5ex plus .2ex}
\titlespacing*{\paragraph} {0pt}{3.25ex plus 1ex minus .2ex}{1em}
\titlespacing*{\subparagraph} {\parindent}{3.25ex plus 1ex minus .2ex}{1em}
\subjclass[2010]{Primary 14R15, 16W20} 
\newtheorem{theorem}{Theorem}[section]
\theoremstyle{definition}
\theoremstyle{remark}
\newtheorem{remark}[theorem]{Remark}
\DeclareMathOperator{\Jac}{Jac}
\DeclareMathOperator{\pd}{pd}
\begin{document}
\title{A slight generalization of Keller's theorem}
\author{Vered Moskowicz}
\address{Department of Mathematics, Bar-Ilan University, Ramat-Gan 52900, Israel.}
\email{vered.moskowicz@gmail.com}
\thanks{The author was partially supported by an Israel-US BSF grant 2010/149}

\begin{abstract}
The famous Jacobian problem asks: Is a morphism 
$f:\mathbb{C}[x,y]\to \mathbb{C}[x,y]$
having an invertible Jacobian, invertible?
If we add the assumption that
$\mathbb{C}(f(x),f(y))=\mathbb{C}(x,y)$, 
then $f$ is invertible; 
this result is due to O. H. Keller (1939).
We suggest the following slight generalization of Keller's theorem: 
If $f:\mathbb{C}[x,y]\to \mathbb{C}[x,y]$ 
is a morphism having an invertible Jacobian,
and if there exist $n \geq 1$, 
$a \in \mathbb{C}(f(x),f(y))^*$ and 
$b \in \mathbb{C}(f(x),f(y))$ such that 
$(ax +b)^n \in \mathbb{C}(f(x),f(y))$,
then $f$ is invertible.
A similar result holds for $\mathbb{C}[x_1,\ldots,x_m]$.
\end{abstract}

\maketitle                  

\section{Introduction}
Throughout this paper we work over $\mathbb{C}$
(Theorems \ref{main thm} and \ref{main thm 2})
or just over a field of characteristic zero 
(Theorems \ref{thm special case}, \ref{thm special case 2},
\ref{thm add result} and \ref{thm add result}). 
Some of the known results that we recall in the preliminaries section
are true over a PID or a UFD or even any ring; 
sometimes $2 \neq 0$.
However, in view of \cite[Lemma 1.1.14]{book van den essen},
it seems that $\mathbb{C}$ is good enough.
Notice that in our main theorem, Theorem \ref{main thm}, 
we apply Formanek's theorem
\cite[Theorem 2]{formanek} which is over $\mathbb{C}$.
Even if Formanek's theorem is still valid over a more general ring, 
$\mathbb{R}$ is not enough for our main theorem,
since we wish to have all $n$'th roots of unity 
for some $n$ which is probably $\geq 3$.   

In all our theorems:
$k=\mathbb{C}$ or $k$ is a field of characteristic zero,
$f:k[x,y] \to k[x,y]$ 
is a morphism that satisfies
$\Jac(f(x),f(y)) \in k^*$,
and 
$p:= f(x)$, $q:= f(y)$.
More generally, when $m \geq 2$:
$f:k[x_1,\ldots,x_m] \to k[x_1,\ldots,x_m]$ 
is a morphism that satisfies
$\Jac(f(x_1),\ldots,f(x_m)) \in k^*$,
and
$r_1:=f(x_1),\ldots,r_m:=f(x_m)$.
Keller's theorem says that the birational case has a positive answer: 
If 
$\mathbb{C}(r_1,\ldots,r_m)= \mathbb{C}(x_1,\ldots,x_m)$, 
then $f$ is invertible.
It can be found in Keller's paper \cite{keller}, van den Essen's book 
\cite[Corollary 1.1.35]{book van den essen} 
and Bass-Connell-Wright's paper \cite[Theorem 2.1 (a) iff (b)]{bass connell wright}.
When $m=2$, Keller's theorem says that if 
$\mathbb{C}(p,q)= \mathbb{C}(x,y)$, 
then $f$ is invertible.
We bring our observation that it is possible to slightly generalize Keller's theorem: 
Instead of demanding 
$\mathbb{C}(x,y)= \mathbb{C}(p,q)$, 
it is enough to demand that
$\mathbb{C}(p,q)((ax+ b)^n) = \mathbb{C}(p,q)$,
for some 
$n \geq 1$, $a \in \mathbb{C}(p,q)^*$ and 
$b \in \mathbb{C}(p,q)$.
The general case when $m \geq 2$ also seems to be true, namely,
instead of demanding 
$\mathbb{C}(x_1,\ldots,x_m) = \mathbb{C}(r_1,\ldots,r_m)$,
it is enough to demand that
for $m-1$ variables $x_j$
$\mathbb{C}(r_1,\ldots,r_m)((a_j x_j + b_j)^{n_j}) = 
\mathbb{C}(r_1,\ldots,r_m)$,
for some 
$n_j \geq 1$, $a_j \in \mathbb{C}(r_1,\ldots,r_m)^*$ and 
$b_j \in \mathbb{C}(r_1,\ldots,r_m)$.
 
\begin{remark}
Of course, instead of taking 
$\mathbb{C} (p,q)((a x + b )^n)$, 
we could have taken
$\mathbb{C} (p,q)((a y + b )^n)$. 
The general case $m \geq 2$ is already in its general form; 
namely, conditions on any $m-1$ variables from $x_1,\ldots,x_m$,
not necessarily the first $m-1$ variables.
\end{remark}

\section{Preliminaries}
We rely on the following nice results: 

\textbf{(1) Wang's and Bass's theorem:}
Let $k$ be a UFD. Then:
$k[x_1,\ldots,x_m] \cap k(r_1,\ldots,r_m)= k[r_1,\ldots,r_m]$.
This result was proved by Wang \cite[Theorem 41(i)]{wang} and generalized by Bass 
for not necessarily polynomial rings
\cite[Remark after Corollary 1.3, page 74]{book bass}
See also 
\cite[Corollary 1.1.34 and Proposition D.1.7]{book van den essen}.
When $m=2$: 
$k[x,y] \cap k(p,q) = k[p,q]$.

\textbf{(2) Equivalent special cases, 
each having a positive answer to the Jacobian Conjecture:} 
Let $k$ be a field of characteristic zero.
Then TFAE:
\begin{itemize}
\item The birational case:
$k(x_1,\dots,x_m) = k(r_1,\ldots,r_m)$.
\item The Galois case:
$k(x_1,\dots,x_m)$ is a Galois extension of $k(r_1,\ldots,r_m)$.
\item The integral case:
$k[x_1,\dots,x_m]$ is integral over $k[r_1,\ldots,r_m]$.
\item The projective case:
$k[x_1,\dots,x_m]$ is a projective $k[r_1,\ldots,r_m]$-module.
\item $f$ is invertible.
\end{itemize}
This and other equivalent conditions can be found in \cite[Theorem 46(iii)]{wang}
and in \cite[Theorem 2.1]{bass connell wright}. 
Some of the above equivalent conditions can be found in 
\cite{keller}(Keller's paper from 1939), \cite{camp}, \cite{abhy}, 
\cite{razar}, \cite{wright}, \cite{oda}, \cite[Theorem 9]{wang2},
\cite[Theorem 2.2.16]{book van den essen}
and probably in other places as well.
 
When $m=2$, TFAE: $k(x,y)=k(p,q)$; $k(x,y)$ is a Galois extension of $k(p,q)$;
$k[x,y]$ is integral over $k[p,q]$; $k[x,y]$ is a projective $k[p,q]$-module. 

\textbf{(3) Formanek's theorem:}
$\mathbb{C}(r_1,\ldots,r_m,x_1,\ldots,x_{m-1})=
\mathbb{C}(x_1,\ldots,x_m)$.
See \cite[Theorem 2]{formanek}.
When $m=2$:
$\mathbb{C}(p,q,x)=\mathbb{C}(x,y)$.
(Of course, we also have 
$\mathbb{C}(p,q,y)=\mathbb{C}(x,y)$. 
Similarly, the equality in the general case is true adjoining any 
$m-1$ variables).

\textbf{(4) Hamann's theorem:} 
If $p$ and $q$ are monic in $y$,
then $k[x,y]$ is integral over $k[p,q][x]$.
(having the same field of fractions: $k(x,y)=k(p,q)(x)$).
See \cite[Proposition 2.1]{haman}.
If $p$ and $q$ of a given morphism $f$
are not monic in $y$, 
then one can consider $gf$ for an appropriate automorphism $g$,
and get $\tilde{p}:=(gf)(x)=g(p)$ and
$\tilde{q}:=(gf)(y)=g(q)$ monic in $y$,
see \cite[Theorem 1.1]{haman}
(``change of variables'').

Although not mentioned explicitly in Hamann's paper, it seems that 
the following is also true: 
If $r_1,\ldots,r_m$ are monic in $x_m$, 
then $k[x_1,\ldots,x_m]$ is integral over 
$k[r_1,\ldots,r_m][x_1,\ldots,x_{m-1}]$. 

 
\begin{remark}
An equivalent way to say $k(x,y)=k(p,q)$ is:
$x \in k(p,q)$ and $y \in k(p,q)$.
Actually, it is enough to demand that $x \in k(p,q)$:
Assume $x \in k(p,q)$.
By Formanek's theorem $k(p,q,x)=k(x,y)$.
Combining the two yields $k(p,q)=k(x,y)$.

Similarly for the general case $m \geq 2$:
It is enough to demand that 
$x_j \in k(r_1,\ldots,r_m)$
for $1 \leq j \leq m-1$
(namely, omitting the demand
$x_m \in k(r_1,\ldots,r_m)$):
By Formanek's theorem 
$k(r_1,\ldots,r_m,x_1,\ldots,x_{m-1})= k(x_1,\ldots,x_m)$,
and combining this with our $m-1$ conditions yields 
$k(r_1,\ldots,r_m)=k(x_1,\ldots,x_m)$.
\end{remark}

\section{The slight generalization}

After the above preparations it is time to prove our main theorem:

\begin{theorem}\label{main thm}
If there exist $n \geq 1$, $a \in \mathbb{C}(p,q)^*$ and 
$b \in \mathbb{C}(p,q)$ such that 
$(ax + b)^n \in \mathbb{C}(p,q)$, 
then $f$ is invertible 
\end{theorem}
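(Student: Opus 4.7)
The plan is to exhibit $\mathbb{C}(x,y)$ as a Galois extension of $L:=\mathbb{C}(p,q)$, and then apply the equivalence in Preliminary (2), which says that under our standing assumption $\Jac(p,q)\in\mathbb{C}^*$, the Galois condition forces $f$ to be invertible.

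First I would set $\alpha:=ax+b$ and $c:=\alpha^n$; by hypothesis $c\in L$. Since $a\in L^*$ and $b\in L$, a direct manipulation gives $L(\alpha)=L(x)$, and Formanek's theorem (Preliminary (3)) identifies $L(x)=\mathbb{C}(p,q,x)$ with $\mathbb{C}(x,y)$. So it suffices to prove that $L(\alpha)/L$ is Galois. The crucial observation is that $\alpha$ is a root of $T^n-c\in L[T]$, and because all $n$-th roots of unity already lie in $\mathbb{C}\subseteq L$, the remaining roots $\zeta\alpha$ automatically belong to $L(\alpha)$. Hence $L(\alpha)/L$ is the splitting field over $L$ of $T^n-c$, a polynomial which is separable as soon as $c\neq 0$ (we are in characteristic zero), and therefore the extension is Galois. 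The degenerate case $c=0$ collapses to $\alpha=0$, whence $x=-b/a\in L$, and the Remark in the Preliminaries (combining $x\in L$ with Formanek) already yields $\mathbb{C}(x,y)=L$, giving the invertibility of $f$ via the birational case.

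I do not anticipate a substantive obstacle. The hypothesis is in effect tailored so that $L(\alpha)/L$ has precisely the Kummer shape needed for the splitting field of $T^n-c$ to coincide with $L(\alpha)$ itself; Formanek's theorem is the essential bridge converting the splitting-field statement for $L(\alpha)=L(x)$ into the Galois condition for the ostensibly larger field $\mathbb{C}(x,y)$ over $\mathbb{C}(p,q)$, which is then fed into Preliminary (2) to conclude. If any point deserves care, it is the initial identification $L(\alpha)=L(x)$ when $a$ is a nonconstant rational function, but since $a\in L^*$ this is immediate.
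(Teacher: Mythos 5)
Your proposal is correct and follows essentially the same route as the paper: identify $\mathbb{C}(p,q,ax+b)=\mathbb{C}(p,q,x)=\mathbb{C}(x,y)$ via Formanek, realize it as the splitting field of $T^n-(ax+b)^n$ over $\mathbb{C}(p,q)$ to get the Galois case, and handle the degenerate situation by the birational case. Your dichotomy ``$c\neq 0$ versus $c=0$'' is just a cleaner phrasing of the paper's ``$h$ separable versus not separable,'' since in characteristic zero $T^n-c$ is inseparable exactly when $c=0$.
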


Recall that 
$\mathbb{C}(p,q) \subseteq \mathbb{C}(x,y)$
is finite ($x$ and $y$ are algebraic over $\mathbb{C}(p,q)$, 
since $\{p,q,x\}$ and $\{p,q,y\}$ are algebraically dependent over 
$\mathbb{C}$. See \cite[Proposition 1.1.31]{book van den essen}) 
and separable.

\begin{proof}
Denote $A=\mathbb{C}(p,q)$
and $B=\mathbb{C}(x,y)$. 

\begin{itemize}
\item $\mathbb{C}(p,q,(ax+b)^n)= A$ by our assumption.
\item $\mathbb{C}(p,q,ax+b)= \mathbb{C}(p,q,x)= B$;
the first equality is trivial, and the second equality is 
Formanek's theorem mentioned above 
\end{itemize}

We show that $A \subseteq B$ is a Galois extension
or $f$ is birational (which also means that 
$A \subseteq B$ is Galois, trivially), 
hence $f$ is invertible,
as was recalled in the preliminaries section.

Let $h:= T^n-(ax+b)^n \in \mathbb{C}(p,q,(ax+b)^n)[T]$.

There are two options:

\textbf{First option:} $h$ is separable.
So $\mathbb{C}(p,q,ax+b)$ is the splitting field of the separable polynomial
$T^n-(ax+b)^n \in \mathbb{C}(p,q,(ax+b)^n)[T]$.
Therefore $A \subseteq B$ is Galois (see \cite[Definition 4.47]{rowen}).

\textbf{Second option:} $h$ is not separable. 
Hence $\epsilon_u(ax+b)=\epsilon_v(ax+b)$,
where $\epsilon_u \neq \epsilon_v$ are two different 
$n$'th roots of $1$.
Then from
$(\epsilon_u-\epsilon_v)(ax+b)=0$,
we get that
$ax+b=0$.
So $x= -b/a \in \mathbb{C}(p,q)$,
and we get 
$\mathbb{C}(p,q)=\mathbb{C}(p,q,ax+b)=
\mathbb{C}(p,q,x)=\mathbb{C}(x,y)$.
Therefore we arrived at the birational case.
\end{proof}

It seems that a similar result holds for 
$\mathbb{C}[x_1,\ldots,x_m]$:

\begin{theorem}\label{main thm 2}
If there exist $n_1,\ldots,n_{m-1} \geq 1$, 
$a_1,\ldots,a_{m-1} \in \mathbb{C}(r_1,\ldots,r_m)^*$ and 
$b_1\ldots,b_{m-1} \in \mathbb{C}(r_1,\ldots,r_m)$
such that for each $1 \leq j \leq m-1$
$(a_j x_j + b_j)^{n_j} \in \mathbb{C}(r_1,\ldots,r_m)$,
then $f$ is invertible.
\end{theorem}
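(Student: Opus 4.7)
The plan is to run the argument of Theorem \ref{main thm} coordinate by coordinate and glue the pieces together using the $m$-variable form of Formanek's theorem. Let $A=\mathbb{C}(r_1,\ldots,r_m)$ and $B=\mathbb{C}(x_1,\ldots,x_m)$. As in the two-variable case, $A\subseteq B$ is finite (since each $x_i$ is algebraic over $A$ by an algebraic dependence argument) and separable. Formanek's theorem, in the form recalled in the preliminaries, gives $A(x_1,\ldots,x_{m-1})=B$, which is the direct analogue of the identity $A(x)=B$ used in the $m=2$ proof.

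For each $j$ with $1\leq j\leq m-1$, I would then consider the polynomial $h_j := T^{n_j}-(a_jx_j+b_j)^{n_j}$, which by hypothesis lies in $A[T]$, and repeat the dichotomy of Theorem \ref{main thm}. If $h_j$ is separable, then since all $n_j$-th roots of unity lie in $\mathbb{C}\subseteq A$, its roots are $\epsilon (a_jx_j+b_j)$ for $\epsilon^{n_j}=1$, and the splitting field over $A$ is $A(a_jx_j+b_j)=A(x_j)$ because $a_j\in A^*$ and $b_j\in A$; hence $A\subseteq A(x_j)$ is Galois. If $h_j$ is not separable, the same cancellation as in the theorem forces $a_jx_j+b_j=0$, so $x_j=-b_j/a_j\in A$, and $A(x_j)=A$ is trivially Galois over $A$.

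Having produced, for each $j$, a Galois subextension $A\subseteq A(x_j)\subseteq B$, I would invoke the standard fact that the compositum of finitely many Galois extensions inside a fixed algebraic closure is Galois to conclude that $A\subseteq A(x_1,\ldots,x_{m-1})$ is Galois. By Formanek's theorem this compositum is exactly $B$, so $A\subseteq B$ is Galois. The list of equivalent conditions recalled in item (2) of the preliminaries then yields that $f$ is invertible.

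The only step that requires a little care, rather than being a direct transcription of the $m=2$ proof, is the combining step: one must know that one may take a compositum of Galois extensions inside $B$ and recover $B$ itself. This is handled cleanly by Formanek's theorem, which guarantees that adjoining only the first $m-1$ variables $x_j$ to $A$ already recovers $B$; so no genuine obstacle appears, and the argument is a faithful multi-variable extension of the proof of Theorem \ref{main thm}.
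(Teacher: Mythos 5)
Your proposal is correct, and it follows the same basic strategy as the paper (the per-variable dichotomy on the separability of $h_j := T^{n_j}-(a_jx_j+b_j)^{n_j}$, Formanek's theorem to identify $\mathbb{C}(r_1,\ldots,r_m,x_1,\ldots,x_{m-1})$ with $B$, and the Galois-case equivalence to conclude invertibility), but the gluing step is genuinely different. The paper works with the single polynomial $h=\prod_j h_j \in A[T]$ and analyzes failure of separability of $h$ in two ways: either some factor $h_{j_0}$ is itself inseparable (forcing $x_{j_0}\in A$), or two factors $h_{i_0}$, $h_{j_0}$ fail to be relatively prime (forcing $x_{j_0}\in \mathbb{C}(r_1,\ldots,r_m,x_{i_0})$); it then discards the redundant generators so that $B$ is exhibited as the splitting field over $A$ of a product of separable, pairwise coprime $h_{t_i}$'s (or else all $x_j\in A$ and one is in the birational case). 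You instead show that each intermediate extension $A\subseteq A(x_j)$ is Galois (splitting field of $h_j$ when $h_j$ is separable, trivial when it is not) and then invoke the standard fact that a compositum of finitely many finite Galois extensions inside a common overfield is Galois, which by Formanek gives $B/A$ Galois directly. Your route buys a cleaner argument: it avoids the coprimality bookkeeping and the removal of redundant generators entirely, at the cost of citing the compositum lemma; the paper's route stays closer to the bare definition of Galois as the splitting field of a single separable polynomial (as in its reference to Rowen) and makes that polynomial explicit. Both arguments are sound and yield the same conclusion.
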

 
Recall that 
$\mathbb{C}(r_1,\ldots,r_m) \subseteq \mathbb{C}(x_1,\ldots,x_m)$
is finite (for every $1 \leq i \leq m$, 
$x_i$ is algebraic over $\mathbb{C}(r_1,\ldots,r_m)$, 
since $\{r_1,\ldots,r_m,x_i\}$ are algebraically dependent over 
$\mathbb{C}$. See \cite[Proposition 1.1.31]{book van den essen}) 
and separable.
 
\begin{proof}
Denote $A=\mathbb{C}(r_1,\ldots,r_m)$
and $B=\mathbb{C}(x_1,\ldots,x_m)$. 

\begin{itemize}
\item $\mathbb{C}(r_1,\ldots,r_m,(a_1 x_1+b_1)^{n_1},\ldots,
(a_{m-1} x_{m-1}+b_{m-1})^{n_{m-1}})= A$ 
by our assumptions.

\item 
$\mathbb{C}(r_1,\ldots,r_m,a_1 x_1+b_1,\ldots,a_{m-1}x_{m-1}+b_{m-1})= 
\mathbb{C}(r_1,\ldots,r_m,x_1,\ldots,x_{m-1})=B$;
the first equality is trivial, and the second equality is Formanek's theorem
mentioned above 
\end{itemize}

We show that $A \subseteq B$ is a Galois extension
or $f$ is birational, hence $f$ is invertible. 

Let $h:= \prod_j h_j \in 
\mathbb{C}(r_1,\ldots,r_m,(a_1 x_1+b_1)^{n_1},\ldots,
(a_{m-1} x_{m-1}+b_{m-1})^{n_{m-1}})[T]$,
where $h_j:= T^{n_j}-(a_jx_j+b_j)^{n_j}$,
$1 \leq j \leq m-1$.

There are two options:

\textbf{First option:} $h$ is separable.
So
$\mathbb{C}(r_1,\ldots,r_m,a_1 x_1+b_1,\ldots,a_{m-1}x_{m-1}+b_{m-1})$  
is the splitting field of the separable polynomial
$h \in 
\mathbb{C}(r_1,\ldots,r_m,(a_1 x_1+b_1)^{n_1},\ldots,
(a_{m-1} x_{m-1}+b_{m-1})^{n_{m-1}})[T]$.
Therefore $A\subseteq B$ is Galois.

\textbf{Second option:} $h$ is not separable. 
Multiple roots of $h$ can be of the following two forms:
\begin{itemize}
\item $h_{j_0}$ is not separable:
There exists $1 \leq j_0 \leq m-1$
such that 
$\epsilon_u(a_{j_0}x_{j_0}+b_{j_0})=\epsilon_v(a_{j_0}x_{j_0}+b_{j_0})$,
where $\epsilon_u \neq \epsilon_v$ are two different 
$n_{j_0}$'th roots of $1$.
Then from
$(\epsilon_u-\epsilon_v)(a_{j_0}x_{j_0}+b_{j_0})=0$,
we get that
$a_{j_0}x_{j_0}+b_{j_0}=0$.
So $x_{j_0}= -b_{j_0}/a_{j_0} \in \mathbb{C}(r_1,\ldots,r_m)$.

\item $h_{i_0}$ and $h_{j_0}$ are not relatively prime:
There exist $1 \leq i_0 \neq j_0 \leq m-1$
such that
$\epsilon(a_{i_0}x_{i_0}+b_{i_0})=\delta(a_{j_0}x_{j_0}+b_{j_0})$,
where $\epsilon^{n_{i_0}}=1$ and
$\delta^{n_{j_0}}=1$.
Then $x_{j_0} \in \mathbb{C}(r_1,\ldots,r_m,x_{i_0})$.
\end{itemize}

In each form we get that $x_{j_0}$ is a redundant generator of 

$\mathbb{C}(r_1,\ldots,r_m,x_1,\ldots,x_{m-1})=
\mathbb{C}(r_1,\ldots,r_m,a_1 x_1+b_1,\ldots,a_{m-1}x_{m-1}+b_{m-1})$.
After removing all the redundant $x_j$'s,
we get one of the two following options:
\begin{itemize}
\item $\mathbb{C}(r_1,\ldots,r_m,x_1,\ldots,x_{m-1})=
\mathbb{C}(r_1,\ldots,r_m,x_{t_1},\ldots,x_{t_s})$,
where 
$1 \leq t_1 < \ldots < t_s \leq m-1$
are such that 
$h_{t_1},\ldots,h_{t_s}$ are separable and relatively prime
Then 
$B= \mathbb{C}(r_1,\ldots,r_m,a_{t_1}x_{t_1}+b_{t_1},\ldots,a_{t_s}x_{t_s}+b_{t_s})$
is the splitting field of the separable polynomial
$h_{t_1}\cdots h_{t_s} \in A[T]$,
so $A \subseteq B$ is Galois.  

\item There are no such $h_{t_1},\ldots,h_{t_s}$.
Then for each $1 \leq j \leq m-1$,
$x_j \in \mathbb{C}(r_1,\ldots,r_m)$,
hence $A=B$, the birational case.
\end{itemize}
\end{proof}

Inspired by \cite[Theorem 46]{wang}, \cite[Theorem 2.1]{bass connell wright} 
and \cite[Theorem 2.2.16]{book van den essen},
we wish to ask the following question:
Is it possible to find a weaker condition than 
$\mathbb{C}(p,q) \subseteq \mathbb{C}(x,y)$ 
being a Galois extension which is equivalent to 
$\mathbb{C}(p,q)(a x + b)^n = \mathbb{C}(p,q)$? 
Since $\mathbb{C}(p,q) \subseteq \mathbb{C}(x,y)$ is finite and separable, 
the weaker condition should be weaker than 
$\mathbb{C}(p,q) \subseteq \mathbb{C}(x,y)$ being normal.

Also:
Is it possible to find a weaker condition than integrality/projectivity of 
$\mathbb{C}[x,y]$ over $\mathbb{C}[p,q]$
which is equivalent to 
$\mathbb{C}(p,q)(a x + b)^n = \mathbb{C}(p,q)$?
Recall that 
$\pd_{\mathbb{C}[p,q]}(\mathbb{C}[x,y]) \in \{0,1\}$ 
\cite[Theorem 44(i)]{wang},
and  
$\pd_{\mathbb{C}[p,q]}(\mathbb{C}[x,y]) =0$ if and only if 
$f$ is invertible \cite[Theorem 46(i)+(iii) (1) iff (9)]{wang}.
Hence, if one can show that 
$\pd_{\mathbb{C}[p,q]}(\mathbb{C}[x,y]) = 1$ implies
$\mathbb{C}(p,q)(a x + b)^n = \mathbb{C}(p,q)$, 
then the Jacobian Conjecture is true.
(Since $\pd_{\mathbb{C}[p,q]}(\mathbb{C}[x,y]) \in \{0,1\}$ 
and each of the two cases implies that $f$ is invertible).


\section{A special case with another proof}

Now we consider a special case in which the following two conditions are assumed:
\begin{itemize}
\item [(1)] $r_1,\ldots,r_m$ are monic in $x_m$.
\item [(2)] Instead of
$a_1,\ldots,a_{m-1} \in \mathbb{C}(r_1,\ldots,r_m)^*$, 
$b_1\ldots,b_{m-1} \in \mathbb{C}(r_1,\ldots,r_m)$,
assume 
$a_1,\ldots,a_{m-1} \in \mathbb{C}^*$, 
$b_1\ldots,b_{m-1} \in \mathbb{C}[r_1,\ldots,r_m]$.
\end{itemize}

On the one hand, this section can be omitted, 
since we already have our above results which are valid 
whether $r_1,\ldots,r_m$ are monic in $x_m$ or not,
and also in our above results there are more options for
$a_j,b_j$. 
On the other hand, we added this section since
we wished to apply Wang's theorem \cite[Theorem 41(i)]{wang} 
which is ture over a more general ring than $\mathbb{C}$,
and we wished to apply Hamann's theorem which is true over a 
field of characteristic zero.
(While, as already mentioned in the introduction,
for the proof of our main theorem \ref{main thm}
we better take $\mathbb{C}$). 
Also, a somewhat similar result, Theorem \ref{thm add result}
(and Theorem \ref{thm add result 2}),
is proved using Wang's and Hamann's theorems,
and we do not know if it can be proved without them.

\begin{theorem}\label{thm special case}
Let $k$ be a field of characteristic zero.
Assume $p$ and $q$ are monic in $y$. 
If there exist $n \geq 1$, $a \in k^*$ and 
$b \in k[p,q]$ such that 
$(ax + b)^n \in k(p,q)$, 
then $f$ is invertible.
\end{theorem}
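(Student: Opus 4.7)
The plan is to show directly that $k[x,y]$ is integral over $k[p,q]$; by the ``integral case'' equivalence recorded in item (2) of the preliminaries, this already forces $f$ to be invertible. The two tools that drive the argument are Wang's theorem, which will promote the hypothesis from $k(p,q)$ to $k[p,q]$, and Hamann's theorem, which is where the new assumption that $p$ and $q$ are monic in $y$ enters.

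The first step is to observe that, since $a \in k^*$ and $b \in k[p,q]$, the element $ax+b$ lies in $k[x,y]$, so $(ax+b)^n$ lies in $k[x,y] \cap k(p,q)$. Wang's theorem identifies this intersection with $k[p,q]$, and hence $ax+b$ is a root of the monic polynomial $T^n - (ax+b)^n \in k[p,q][T]$. Therefore $ax+b$ is integral over $k[p,q]$, and a short manipulation using $b \in k[p,q]$ and $a \in k^* \subseteq k[p,q]^*$ upgrades this to the assertion that $x$ itself is integral over $k[p,q]$. In particular $k[p,q][x]$ is integral over $k[p,q]$.

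The second step is to apply Hamann's theorem: since $p$ and $q$ are monic in $y$, $k[x,y]$ is integral over $k[p,q][x]$. Combining this with the first step via transitivity of integrality yields that $k[x,y]$ is integral over $k[p,q]$, at which point the equivalence cited in the preliminaries closes the proof.

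I do not anticipate any serious obstacle. The only mildly careful point is the passage from integrality of $ax+b$ to integrality of $x$, which has to use that $a$ is a unit in $k[p,q]$; without the stronger hypothesis $a \in k^*$ (rather than $a \in k(p,q)^*$) one would not even have $a \in k[p,q]$ to begin with, which is precisely why this special case is stated with $a \in k^*$ and $b \in k[p,q]$ and not with the weaker conditions of Theorem \ref{main thm}.
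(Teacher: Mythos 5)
Your proof is correct and follows essentially the same route as the paper: Wang's theorem to place the relevant element in $k[p,q]$, deduce that $x$ is integral over $k[p,q]$, then Hamann's theorem plus transitivity and the integral-case equivalence. The only cosmetic difference is that the paper expands $(ax+b)^n$ and divides by $a^n$ to exhibit a monic equation for $x$ directly, whereas you first note $(ax+b)^n\in k[p,q]$, conclude $ax+b$ is integral, and then pass to $x$ using that $a\in k^*$ and $b\in k[p,q]$ -- both are valid and equivalent in substance.
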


\begin{remark}
Of course, we could have assumed that $p$ and $q$ are monic in $x$
and demand that
$(ay + b)^n \in k(p,q)$, 
instead of 
$(ax + b)^n \in k(p,q)$. 
\end{remark}

\begin{proof}
Assume $(a x + b)^n \in k(p,q)$
for some $n \geq 1$, $a \in k^*$ and 
$b \in k[p,q]$. 
Expand $(a x +b)^n$ and get that 
$a^n x^n + n a^{n-1}x^{n-1}b+\ldots+b^n \in k(p,q)$. 
Hence, $a^n x^n + n a^{n-1}x^{n-1}b+\ldots+b^n =c/d$, 
for some $c,d \in k[p,q]$ with $d \neq 0$.
Multiply by $a^{-n}$ and get that 
$x^n + n a^{-1}x^{n-1}b+\ldots+a^{-n}b^n =
a^{-n}c/d :=w$.
$w= x^n + n a^{-1}x^{n-1}b+\ldots+a^{-n}b^n  \in k[x,y]$,
and
$w=a^{-n}c/d \in k(p,q)$,
so, $w \in k[x,y] \cap k(p,q)$.
But $k[x,y] \cap k(p,q) \subseteq k[p,q]$
(this is the non-trivial inclusion of 
$k[x,y] \cap k(p,q) = k[p,q]$), 
hence $w \in k[p,q]$. 
Namely, $x^n + n a^{-1}x^{n-1}b+\ldots+a^{-n}b^n 
\in k[p,q]$, 
which shows that $x$ is integral over $k[p,q]$.
Therefore (by \cite[Corollary 5.2]{book ati} 
and the remark in that page: ``finitely generated algebra+ integral = finitely generated module''), 
$k[p,q] \subseteq k[p,q][x]$ is integral. 
Combine this with Hamann's theorem that 
$k[p,q][x] \subseteq k[x,y]$ is integral, 
and get that $k[p,q] \subseteq k[x,y]$ is integral.
Finally, since the integral case has a positive answer
(as was recalled in the preliminaries section), 
we get that $f$ is invertible.
\end{proof}

And similarly:
\begin{theorem}\label{thm special case 2}
Let $k$ be a field of characteristic zero.
Assume $r_1,\ldots,r_m$ are monic in $x_m$. 
If there exist $n_1,\ldots,n_{m-1} \geq 1$, 
$a_1,\ldots,a_{m-1} \in k^*$ and 
$b_1,\ldots,b_{m-1} \in k[r_1,\ldots,r_m]$ 
such that for each $1 \leq j \leq m-1$,
$(a_jx_j + b_j)^{n_j} \in k(r_1,\ldots,r_m)$, 
then $f$ is invertible.
\end{theorem}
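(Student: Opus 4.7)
The plan is to mimic the proof of Theorem \ref{thm special case} component by component. For each $1 \leq j \leq m-1$, I would expand $(a_j x_j + b_j)^{n_j}$ and multiply through by $a_j^{-n_j} \in k^*$, obtaining
\[
x_j^{n_j} + n_j a_j^{-1} b_j x_j^{n_j-1} + \cdots + a_j^{-n_j} b_j^{n_j} \in k[x_1,\ldots,x_m] \cap k(r_1,\ldots,r_m).
\]
The left side is a polynomial in $k[x_1,\ldots,x_m]$ since the $b_j$ lie in $k[r_1,\ldots,r_m] \subseteq k[x_1,\ldots,x_m]$, and the right side is a rational function in $k(r_1,\ldots,r_m)$ by hypothesis.

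Next I invoke Wang's theorem (item (1) of the preliminaries) to conclude that this intersection equals $k[r_1,\ldots,r_m]$. Thus each $x_j$ (for $j = 1,\ldots,m-1$) satisfies a monic polynomial over $k[r_1,\ldots,r_m]$; that is, $x_j$ is integral over $k[r_1,\ldots,r_m]$. Since integral elements form a ring, the extension
\[
k[r_1,\ldots,r_m] \subseteq k[r_1,\ldots,r_m][x_1,\ldots,x_{m-1}]
\]
is integral (also citing \cite[Corollary 5.2]{book ati} to pass from ``integral'' to ``finitely generated module'' in the finitely generated algebra setting).

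Now I bring in the monic assumption on $r_1,\ldots,r_m$ in $x_m$: the $m$-variable analogue of Hamann's theorem (mentioned at the end of item (4) of the preliminaries) says that $k[x_1,\ldots,x_m]$ is integral over $k[r_1,\ldots,r_m][x_1,\ldots,x_{m-1}]$. Composing the two integral extensions via transitivity of integral dependence, $k[x_1,\ldots,x_m]$ is integral over $k[r_1,\ldots,r_m]$, so we are in the integral case, which by item (2) of the preliminaries forces $f$ to be invertible.

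No single step looks like a serious obstacle; the only mild point of care is checking that the $m$-variable Hamann result, which the paper attributes to Hamann implicitly rather than explicitly, genuinely applies here—but the excerpt already records it as valid, so we may invoke it. Otherwise the argument is essentially a term-by-term upgrade of the proof of Theorem \ref{thm special case}, with the separate integrality conclusions for $x_1,\ldots,x_{m-1}$ being combined before one final application of Hamann.
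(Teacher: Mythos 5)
Your proposal follows the paper's own proof essentially verbatim: expand, normalize by $a_j^{-n_j}$, use Wang's intersection theorem to get that each $x_j$ ($1 \leq j \leq m-1$) is integral over $k[r_1,\ldots,r_m]$, then compose with the $m$-variable Hamann integrality and conclude via the integral case. No discrepancies; the argument is correct and matches the paper's route.
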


\begin{proof}
Expand $(a_jx_j +b_j)^{n_j}$ and get that 
$(a_j)^{n_j} (x_j)^{n_j} + (n_j) (a_j)^{n_j-1}(x_j)^{n_j-1}b_j+\ldots+(b_j)^{n_j} \in k(r_1,\ldots,r_m)$. 
Hence, 
$(a_j)^{n_j} (x_j)^{n_j} + (n_j) (a_j)^{n_j-1}(x_j)^{n_j-1}b_j+\ldots+(b_j)^{n_j} =c_j/d_j$,
for some $c_j,d_j \in k[r_1,\ldots,r_m]$ with $d_j \neq 0$.
Multiply by $(a_j)^{-n_j}$ and get
(after same considerations as in the proof of Theorem \ref{thm special case})
that $x_j$ is integral over $k[r_1,\ldots,r_m]$.
Therefore, 
$k[r_1,\ldots,r_m] \subseteq k[r_1\ldots,r_m][x_1,\ldots,x_{m-1}]$ 
is integral. 
Combine this with Hamann's theorem that 
$k[r_1,\ldots,r_m][x_1,\ldots,x_{m-1}]
\subseteq k[x_1,\ldots,x_m]$ is integral, 
and get that $k[r_1,\ldots,r_m] \subseteq k[x_1,\ldots,x_m]$ is integral.
Finally, since the integral case has a positive answer, 
we get that $f$ is invertible.
\end{proof}

\begin{remark}
Without knowing Formanek's theorem \cite[Theorem 2]{formanek},
and Hamann's theorem \cite[Proposition 2.1]{haman},
and only knowing Wang's theorem,
we could still have similar results. 
More precisely, instead of demanding one condition ($m-1$ conditions), 
we should have demanded two conditions ($m$ conditions),
because we need to guarantee that each of the $m$ variables 
$x_1,\ldots,x_m$ is integral over $k[r_1,\ldots,r_m]$.
More precisely:
If there exist $n,\tilde{n} \geq 1$, 
$a, \tilde{a} \in k^*$ and 
$b, \tilde{b} \in k[p,q]$ such that
$(ax + b)^n \in k(p,q)$ 
and
$(\tilde{a}y + \tilde{b})^{\tilde{n}} \in k(p,q)$,
then $f$ is invertible.
The proof is not difficult: 
The first condition implies that $x$ is integral over $k[p,q]$, 
and the second condition implies that 
$y$ is integral over $k[p,q]$.
Therefore, $k[x,y]=k[p,q][x,y]$ is integral over $k[p,q]$. 
Hence $f$ is invertible.
The general case $m \geq 2$ is similar.
\end{remark}

\section{An additional result}
The following result is proved quite similarly to 
Theorem \ref{thm special case}.

\begin{theorem}\label{thm add result}
Let $k$ be a field of characteristic zero.
Assume $p$ and $q$ are monic in $y$.  
If there exist $n_1 > n_2 > \ldots > n_l \geq 0$ and
$a_{n_2},\ldots,a_{n_l} \in k[p,q]$ such that
$x^{n_1}+a_{n_2}x^{n_2}+\ldots+a_{n_l}x^{n_l} \in k(p,q)$,
then $f$ is invertible.
\end{theorem}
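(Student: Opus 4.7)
My plan is to mirror the proof of Theorem \ref{thm special case} almost verbatim: the hypothesis here is already presented in the ``expanded monic'' form that the earlier proof produces after expanding $(ax+b)^n$ and dividing by $a^n$, so the remaining chain Wang $\Rightarrow$ integrality of $x$ $\Rightarrow$ Hamann $\Rightarrow$ integral case $\Rightarrow$ invertibility should transfer without modification.

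First I would set $w := x^{n_1} + a_{n_2}x^{n_2} + \ldots + a_{n_l}x^{n_l}$. By construction $w \in k[x,y]$, and by hypothesis $w \in k(p,q)$; thus $w \in k[x,y] \cap k(p,q)$. Applying the non-trivial inclusion in Wang's theorem (Preliminary (1)) gives $w \in k[p,q]$. The relation $x^{n_1} + a_{n_2}x^{n_2} + \ldots + a_{n_l}x^{n_l} - w = 0$ then exhibits $x$ as a root of a monic polynomial in $k[p,q][T]$, since $n_1$ is strictly largest among the exponents and the coefficient of $x^{n_1}$ is $1$. Hence $x$ is integral over $k[p,q]$, and therefore $k[p,q] \subseteq k[p,q][x]$ is an integral extension.

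To finish, I would invoke Hamann's theorem (Preliminary (4)): the hypothesis that $p$ and $q$ are monic in $y$ yields that $k[p,q][x] \subseteq k[x,y]$ is integral. By transitivity, $k[p,q] \subseteq k[x,y]$ is integral, and the equivalence of the integral case with invertibility (Preliminary (2)) delivers the conclusion that $f$ is invertible. I do not see any genuine obstacle: the only cosmetic point to verify is that allowing $n_l \geq 0$ (rather than $n_l > 0$) is harmless, since a constant term $a_{n_l}$ with $n_l = 0$ merely contributes to the trailing coefficient of the monic equation satisfied by $x$, leaving the leading coefficient $1$ untouched. In particular, this gives Theorem \ref{thm special case} as a special case via the binomial expansion, which is a useful sanity check.
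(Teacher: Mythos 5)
Your proposal is correct and follows exactly the route of the paper's own proof: note that $x^{n_1}+a_{n_2}x^{n_2}+\ldots+a_{n_l}x^{n_l} \in k(p,q)\cap k[x,y]=k[p,q]$ by Wang's theorem, deduce that $x$ is integral over $k[p,q]$, and then combine with Hamann's theorem and transitivity to land in the integral case, which gives invertibility. The only addition is your explicit check of the harmless $n_l=0$ case, which the paper leaves implicit.
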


\begin{proof}
$x^{n_1}+a_{n_2}x^{n_2}+\ldots+a_{n_l}x^{n_l}
\in k(p,q) \cap k[x,y] = k[p,q]$.
Hence, $x$ is integral over $k[p,q]$.
Therefore, $k[p,q] \subseteq k[p,q][x]$ is integral. 
Combine this with Hamann's theorem that 
$k[p,q][x] \subseteq k[x,y]$ is integral, 
and get that $k[p,q] \subseteq k[x,y]$ is integral.
Finally, since the integral case has a positive answer, 
we get that $f$ is invertible.
\end{proof} 

If we remove the assumption that $p$ and $q$ are monic in $y$,
then we should add the condition that
there exist $\tilde{n_1} > \tilde{n_2} > \ldots > \tilde{n_l} \geq 0$ and
$\tilde{a_{\tilde{n_2}}},\ldots,\tilde{a_{\tilde{n_l}}} \in k[p,q]$ 
such that
$y^{\tilde{n_1}}+\tilde{a_{\tilde{n_2}}}y^{\tilde{n_2}}+\ldots+
\tilde{a_{\tilde{n_l}}}y^{\tilde{n_l}} \in k(p,q)$
(in order to get that $y$ is integral over $k[p,q]$. 
So both $x$ and $y$ are integral over $k[p,q]$,
and we arrived at the integral case).

Obviously, the general case result is also true:

\begin{theorem}\label{thm add result 2}
Let $k$ be a field of characteristic zero.
Assume $r_1,\ldots,r_m$ are monic in $x_m$.  
If there exist $n_{1,j} > n_{2,j} > \ldots > n_{l,j} \geq 0$ and
$a_{n_{2,j}},\ldots,a_{n_{l,j}} \in k[r_1,\ldots,r_m]$ such that
for each $1 \leq j \leq m-1$,
$(x_j)^{n_{1,j}}+a_{n_{2,j}}(x_j)^{n_{2,j}}+\ldots+
a_{n_{l,j}}(x_j)^{n_{l,j}} \in k(r_1,\ldots,r_m)$,
then $f$ is invertible.
\end{theorem}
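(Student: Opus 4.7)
The plan is to mimic the proof of Theorem \ref{thm add result} variable-by-variable, then invoke the generalized form of Hamann's theorem stated at the end of item (4) of the Preliminaries, and conclude by the integral case equivalence from item (2).

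First, for each fixed $1 \leq j \leq m-1$, the element
\[
w_j := (x_j)^{n_{1,j}}+a_{n_{2,j}}(x_j)^{n_{2,j}}+\ldots+
a_{n_{l,j}}(x_j)^{n_{l,j}}
\]
lies in $k[x_1,\ldots,x_m]$ by construction and in $k(r_1,\ldots,r_m)$ by hypothesis. Wang's theorem (item (1) of the Preliminaries, applied with the UFD $k$) yields
\[
k[x_1,\ldots,x_m]\cap k(r_1,\ldots,r_m)=k[r_1,\ldots,r_m],
\]
so $w_j\in k[r_1,\ldots,r_m]$. Since $n_{1,j}>n_{2,j}>\ldots>n_{l,j}\geq 0$, the polynomial $T^{n_{1,j}}+a_{n_{2,j}}T^{n_{2,j}}+\ldots+a_{n_{l,j}}T^{n_{l,j}}-w_j$ is monic in $T$ with coefficients in $k[r_1,\ldots,r_m]$ and vanishes at $T=x_j$, so $x_j$ is integral over $k[r_1,\ldots,r_m]$.

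Since this holds for $j=1,\ldots,m-1$, the ring
\[
k[r_1,\ldots,r_m][x_1,\ldots,x_{m-1}]
\]
is generated as a $k[r_1,\ldots,r_m]$-algebra by the finitely many integral elements $x_1,\ldots,x_{m-1}$, and therefore is integral over $k[r_1,\ldots,r_m]$ (by \cite[Corollary 5.2]{book ati} together with the remark on the same page). Next, the hypothesis that $r_1,\ldots,r_m$ are monic in $x_m$ lets us apply the generalized Hamann's theorem recorded in the Preliminaries: $k[x_1,\ldots,x_m]$ is integral over $k[r_1,\ldots,r_m][x_1,\ldots,x_{m-1}]$. Transitivity of integrality then gives that $k[x_1,\ldots,x_m]$ is integral over $k[r_1,\ldots,r_m]$, i.e.\ we are in the integral case, and so by the equivalence listed in item (2) of the Preliminaries, $f$ is invertible.

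Virtually all of this is routine once Theorem \ref{thm add result} and Theorem \ref{thm special case 2} are in hand; the only mild subtlety is ensuring the generalized Hamann step is legitimate in the stated generality, which is precisely the extension of \cite[Proposition 2.1]{haman} noted (without explicit reference) at the end of item (4). I would therefore expect the main (minor) obstacle to be writing a clean justification of that generalized Hamann statement if one wished to make the proof fully self-contained; given the excerpt already cites it as known, no further work is needed here.
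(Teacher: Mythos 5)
Your proof is correct and follows essentially the same route the paper intends for this theorem (the argument of Theorems \ref{thm add result} and \ref{thm special case 2}): Wang's theorem to place each $w_j$ in $k[r_1,\ldots,r_m]$, hence integrality of $x_1,\ldots,x_{m-1}$, then the generalized Hamann statement from item (4) and transitivity to reach the integral case. Your closing caveat about the generalized Hamann step matches the paper's own caveat, which likewise only asserts it "seems" true rather than proving it.
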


And if we remove the assumption that
$r_1,\ldots,r_m$ are monic in $x_m$,
then we should add the condition that
there exist $n_{1,m} > n_{2,m} > \ldots > n_{l,m} \geq 0$ and
$a_{n_{2,m}},\ldots,a_{n_{l,m}} \in k[r_1,\ldots,r_m]$ such that

$(x_m)^{n_{1,m}}+a_{n_{2,m}}(x_m)^{n_{2,m}}+\ldots+
a_{n_{l,m}}(x_m)^{n_{l,m}} \in k(r_1,\ldots,r_m)$
(in order to get that $x_m$ is integral over $k[r_1,\ldots,r_m]$
So $x_1,\ldots,x_m$ are integral over $k[r_1,\ldots,r_m]$,
and we arrived at the integral case).
\bibliographystyle{plain}

\end{document}